\numberwithin{equation}{section}
\newtheorem{theorem}{Theorem}[section]
\newtheorem{corollary}[theorem]{Corollary}
\newtheorem{proposition}[theorem]{Proposition}
\theoremstyle{definition}
\newtheorem{definition}[theorem]{Definition}
\theoremstyle{remark}
\date{}
\begin{document}

\title{On strongly convex projectively flat and dually flat complex Finsler metrics}
\author{Hongchuan Xia$^1$\quad\quad Chunping Zhong$^2$
\\
{\small $^1$ College of Mathematics and Statistics, Xinyang Normal University, Xinyang  464000, China}\\
{\small $^2$ School of Mathematical Sciences, Xiamen
University, Xiamen 361005, China}\\
 }

\date{}

\maketitle
\begin{abstract}
In this paper, we prove that a strongly convex complex Finsler metric $F$  on a domain $D\subset\mathbb{C}^n$ is projectively flat (resp. dually flat) if and only if $F$ comes from a strongly convex complex Minkowski metric.
\end{abstract}
\textbf{Keywords:} strongly convex complex Finsler metric, projectively flat, dually flat.\\
\textbf{MSC(2010):}  53C60, 53C40.\\

\section{Introduction and main theorem}
In 1900, Hilbert announced his famous 23 problems \cite{Hilbert}. The Hilbert's Fourth Problem is to characterize the distance functions on an open subset in $\mathbb{R}^n$ such that straight lines are shortest paths. Distance functions induced by Finsler metrics are regarded as smooth cases. Therefore the
Hilbert's Fourth Problem in smooth case is to  characterize Finsler metrics on an open subset in $\mathbb{R}^n$ whose geodesics are straight lines. Finsler metrics with this property are called projectively flat Finsler metrics \cite{CS}. In 1903, Hamel \cite{H} proved that a real Finsler metric $F(x,u)$ defined on an open subset $D\subset\mathbb{R}^n$ is projectively flat if and only if
\begin{equation}
\sum_{b=1}^n\frac{\partial^2F}{\partial x^b\partial u^a}u^b=\frac{\partial F}{\partial x^a},\quad \mbox{for}\;a=1,\cdots,n,\label{pf}
\end{equation}
where $x=(x^1,\cdots,x^n)\in D$ and $u=(u^1,\cdots,u^n)\in T_xD$.
It is still difficult to understand the local metric structure of such metrics, and many geometers are interested in this kind of metrics, especially projectively flat real Finsler metrics with constant flag curvatures \cite{ChengShen, Ber1, Ber2, Bryant1, Bryant2, Shen, Mo, Zhou, LiB}. Recently, Li \cite{LiB} obtained a classification of locally projectively flat real Finsler metrics with constant flag curvatures.

The notion of dually flat real Finsler metrics was first introduced by  Amari and Nagaoka \cite{Am} in information geometry. Several years later, Shen \cite{Sh} extended this notion to real Finsler geometry and proved that a real Finsler metric $F(x,u)$ defined on an open subset $D\subset\mathbb{R}^n$ is dually flat if and only if $F$ satisfies
\begin{equation}
\sum_{b=1}^n\frac{\partial^2F^2}{\partial x^b\partial u^a}u^b=2\frac{\partial F^2}{\partial x^a},\quad\mbox{for}\;a=1,\cdots,n.\label{df}
\end{equation}
The study of dually flat real Finsler metrics also attracts a lot of interests  \cite{Cheng, Xia, Tayebi, LiB2, Yu, Yu2, Huang}.

 By \eqref{pf} and \eqref{df}, it  follows that every real Minkowski metric is a projectively flat (resp. dually flat) metric. There are, however, lots of real Finsler metrics which are not come from real Minkowski metrics, but they are projectively flat (resp. dually flat).

 Note that complex Finsler metrics such as the Kobayashi and Carath$\acute{\mbox{e}}$odory metrics arise naturally in geometric function theory in several complex variables \cite{AP}. In Hermitian geometry, a Hermitian metric on a complex manifold is necessarily a Riemannian metric which is compatible with the complex structure of the ambient complex manifold. In complex Finsler geometry, complex Finsler metrics are not tightly related to real Finsler metrics. A complex Finsler metric is not necessarily a real Finsler metric, a real Finsler metric on a complex manifold is not necessarily a complex Finsler metric. Recently in \cite{Zhong3,XZ3,XZ4}, however, the authors showed that there are lots of strongly convex complex Finsler metrics, i.e., if one neglecting the complex structure and rewriting them in terms of real coordinates on the tangent bundle, these metrics  are also real Finsler metrics on complex manifolds.

 One may wonder whether there are strongly convex complex Finsler metrics which are projectively flat (resp. dually flat). Compared to real Finsler geometry,  we obtain the following main theorem, which is extremely simple.

\begin{theorem}
Let $F$ be a strongly convex complex Finsler metric on an open domain $D\subset\mathbb{C}^n$. Then

(i) $F$ is projectively flat if and only if it comes from a strongly convex complex Minkowski metric;

(ii) $F$ is dually flat if and only if it comes from a strongly convex complex Minkowski metric.
\end{theorem}

\section{Preliminaries }

\begin{definition}\cite{AP}
A real Minkowski norm on $\mathbb{R}^n$ is a  function $F: \mathbb{R}^n\rightarrow [0,+\infty)$ satisfying

 (C1)\quad $F$ is $C^\infty$ on $\mathbb{R}^n\setminus \{0\}$;

 (C2)\quad $F(\lambda u)=|\lambda|F(u)$ for all $\lambda\in\mathbb{R}$ and $u=(u^1,\cdots,u^n)\in \mathbb{R}^n$;

 (C3)\quad $F$ is strongly convex, i.e., the matrix
$$g_{ij}(u):=\frac{1}{2}\frac{\partial^2F^2}{\partial u^i\partial u^j}(u)$$
is positive definite on $\mathbb{R}^n\setminus\{0\}$.
\end{definition}

\begin{definition}\cite{AP}
A real Finsler metric $F$ on a smooth  manifold $M$ is a  function $F: TM\rightarrow[0,+\infty)$ satisfying

 (C1)\quad $F$ is $C^\infty$ on $TM\setminus\{\mbox{zero section}\}$;

 (C2)\quad  The restriction of $F$ to $T_pM\cong\{p\}\times\mathbb{R}^n$ is a real Minkowski norm  for each $p\in M$.
\end{definition}

\begin{definition}\label{def1}\cite{AP}
A complex Minkowski norm on $\mathbb{C}^n$ is a  function $F: \mathbb{C}^n\rightarrow [0,+\infty)$ satisfying

(C1)\quad $F$ is $C^\infty$ on $\mathbb{C}^n\setminus\{0\}$;

(C2)\quad $F(\lambda v)=|\lambda|F(v)$ for all $\lambda\in\mathbb{C}$ and $v=(v^1,\cdots,v^n)\in \mathbb{C}^n$;

(C3)\quad $F$ is strongly pseudoconvex, i.e., the  matrix
$$G_{\alpha\overline{\beta}}(v):=\frac{\partial^2F^2}{\partial v^\alpha\partial\overline{v^\beta}}(v)$$
is positive definite on $\mathbb{C}^n\setminus\{0\}$.
\end{definition}

\begin{definition}\cite{AP}
A strongly pseudoconvex complex Finsler metric $F$ on a complex  manifold $M$ is a  function $F: T^{1,0}M\rightarrow [0,+\infty)$ satisfying

(C1)\quad $F$ is $C^\infty$ on $T^{1,0}M\setminus\{\mbox{zero section}\}$;

(C2)\quad The restriction of $F$ to $T_p^{1,0}M\cong\{p\}\times \mathbb{C}^n$ is a complex Minkowski norm  for each $p\in M$.
\end{definition}

Let $M$ be a complex $n$-dimensional manifold with the canonical complex structure $J$. Denote by $T_{\mathbb{R}}M$ the real tangent
bundle, and $T_{\mathbb{C}}M$ the complexified tangent bundle of $M$. Then $J$ acts complex linearly on $T_{\mathbb{C}}M$ so that $T_{\mathbb{C}}M = T^{1,0}M\oplus T^{0,1}M$,
 where $T^{1,0}M$ is called the holomorphic tangent bundle of $M$,
and we also denote by $J$ the naturally induced complex structure on $T^{1,0}M$ if it causes no confusion.
Let $\{z^1,\cdots, z^n\}$ be a set of local complex coordinates on $M$, with $z^i = x^i+\sqrt{-1}x^{i+n}$, so that $\{x^1,\cdots,x^n,x^{1+n}, \cdots, x^{2n}\}$
are local real coordinates on $M$. Denote by $\{z^1,\cdots,z^n,v^1,\cdots,v^n\}$ the induced complex coordinates on $T^{1,0}M$, with $v^i=u^i +\sqrt{-1}u^{i+n}$,
so that $\{x^1,\cdots, x^{2n}, u^1,\cdots, u^{2n}\}$ are local real coordinates on $T_{\mathbb{R}}M$.

The bundles $T^{1,0}M$ and $T_{\mathbb{R}}M$ are isomorphic. We choose the explicit isomorphism $^o: T^{1,0}M\rightarrow T_{\mathbb{R}}M$ with
its inverse $_o: T_{\mathbb{R}}M\rightarrow T^{1,0}M$, which are respectively given by
\begin{equation*}
T_{\mathbb{R}}M\ni u=v^o=v+\overline{v},\;\;\;\;\forall v\in T^{1,0}M
\end{equation*}
and
\begin{equation*}
T^{1,0}M\ni v=u_o=\frac{1}{2}(u-\sqrt{-1}Ju),\;\;\;\;\forall u\in T_{\mathbb{R}}M.
\end{equation*}

Let $F: T^{1,0}M\rightarrow [0,+\infty)$ be a complex Finsler metric on a complex manifold $M$. Using the complex structure $J$ on $M$ and
the bundle map $_o: T_{\mathbb{R}}M\rightarrow T^{1,0}M$ we can define a real function
\begin{equation}
F^o: T_{\mathbb{R}}M\rightarrow [0,+\infty),\;\;F^o(u):=F(u_o),\;\;\forall u\in T_{\mathbb{R}}M.
\end{equation}
\begin{definition}\cite{AP}
A complex Finsler metric $F$ is called strongly convex if the associated function $F^o$ is a real Finsler metric.
\end{definition}
Since the the strong convexity of the $F^o$-indicatrix
 $I_{F^o}(p)=\{u\in T_{\mathbb{R},p}M|F^o(u)<1\}$ implies the strong pseudoconvexity of  $F$-indicatrix $I_{F}(p)=\{v\in T^{1,0}_{p}M|F(v)<1\}$, a strongly convex complex Finsler metric $F$ is necessarily strongly pseudoconvex. The converse, however, is not true.
 In the following, we use the same symbol $F$ to denote the associated real Finsler metric $F^o$ with
 the understanding that $F(u)$ is defined by $F(u_o)$ for $u\in T_{\mathbb{R}}M$.

 Obviously, every real Minkowski norm is a projectively flat and dually flat metric. A famous projectively flat and dually flat Finsler metric is the Funk metric $F = F(x, u)$ on a strongly convex domain $D\subset\mathbb{R}^n$ which is neither a Riemannian metric  nor a real Minkowski norm and of constant flag curvature $K = -\frac{1}{4}$. Especially, when $D$ is the unit ball $\mathbb{B}^n\subset \mathbb{R}^n$, the Funk metric is given by
\begin{equation}
F(x, u)=\frac{\sqrt{(1-|x|^2)|u|^2+\langle x \,|\, u \rangle^2}}{1-|x|^2}+\frac{\langle x \,|\, u\rangle}{1-|x|^2},\label{funk}
\end{equation}
where $|\cdot|$ and $\langle\cdot \,|\, \cdot \rangle $ denote respectively the real Euclidean norm and real Euclidean inner product.

In general, a real Finsler metric $F$ on an open domain $D \subset \mathbb{R}^{2n}\cong \mathbb{C}^n$ is not necessarily a complex Finsler metric. In fact, consider the Funk metric $F=F(x, u)$ of the form \eqref{funk} on the unit ball $\mathbb{B}^{2n}\subset\mathbb{C}^n$, we can rewritten it with respect to the complex coordinates as follows
\begin{equation}
F(z,v)=\frac{\sqrt{(1-\|z\|^2)\|v\|^2+(\mbox{Re}\langle z,v \rangle)^2}}{1-\|z\|^2}+\frac{\mbox{Re}\langle z,v\rangle}{1-\|z\|^2},\label{funk1}
\end{equation}
where $\mbox{Re}\langle z , v\rangle$ denotes the real part of $\langle z , v\rangle$, $\| \cdot \|$ and $\langle\cdot, \cdot \rangle $ denote respectively the complex Euclidean norm and the complex Euclidean inner product. It is easy to check that the function \eqref{funk1} does not satisfy the homogeneous condition (C2) in Definition \ref{def1}.

Recently, by a systematical investigation on unitary invariant metrics \cite{Zhong3,XZ1,XZ2}, general complex $(\alpha, \beta)$ metrics \cite{XZ3} and strongly convex complex Finsler metrics \cite{XZ4}, the authors constructed lots of strongly convex complex Finsler metrics. Thus for a given $n$-dimensional complex manifold $M$ endowed with a
strongly convex complex Finsler metric $F$, if neglecting the complex structure on $M$, we can view it as a $2n$-dimensional real Finsler manifold. A natural question is, whether there exists a strongly convex complex Finsler metric which is also projectively flat or dually flat in the real Finsler sense? If exists, whether there exists a non complex Minkowski metric among those strongly convex complex Finsler metrics? The purpose of this paper is to answer these two questions.

\section{Proof of the main theorem}
In \cite{Zhong3}, the second author obtained a complex version of characterization for strongly convex complex Finsler metrics to be projectively flat and
 dually flat, which corresponds respectively to the PDEs systems \eqref{pf} and \eqref{df} obtained in real case. In the following we shall use the Einstein summation convention, i.e., the same indexes up and down are summed over their
ranges. In order to prove our result, we need the following proposition.

\begin{proposition}\label{pr1}\cite{Zhong3}
Let $F(z,v)$ be a strongly convex complex Finsler metric on an open domain $D\subset\mathbb{C}^n$. Then

(1) $F$ is projectively flat if and only if
\begin{equation}
\frac{\partial F}{\partial z^i}=\frac{\partial^2F}{\partial z^j\partial v^i}v^j+\frac{\partial^2 F}{\partial \overline{z^j}\partial v^i}\overline{v^j},\quad\quad\forall\; i=1,\cdots,n.\label{pfc}
\end{equation}

(2) $F$ is dually flat if and only if
\begin{equation}
2\frac{\partial F^2}{\partial z^i}=\frac{\partial^2F^2}{\partial z^j\partial v^i}v^j+\frac{\partial^2F^2}{\partial \overline{z^j}\partial v^i}\overline{v^j},\quad\quad\forall \;i=1,\cdots,n.\label{dpfc}
\end{equation}
\end{proposition}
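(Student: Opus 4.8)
The plan is to deduce Proposition~\ref{pr1} from the real-variable criteria already recalled in the introduction — Hamel's equation \eqref{pf} for projective flatness and Shen's equation \eqref{df} for dual flatness — applied to the associated real Finsler metric $F^{o}$ on the domain $D\subset\mathbb{R}^{2n}$, and then to translate those equations into holomorphic coordinates. By the paper's conventions, ``$F$ is projectively flat'' means ``$F^{o}$ is projectively flat'', which by \eqref{pf} (with $n$ there replaced by $2n$) is the system $\sum_{b=1}^{2n}\big(\partial^{2}F/\partial x^{b}\partial u^{a}\big)u^{b}=\partial F/\partial x^{a}$, $a=1,\dots,2n$; moreover, under the identifications $z^{i}=x^{i}+\sqrt{-1}\,x^{i+n}$, $v^{i}=u^{i}+\sqrt{-1}\,u^{i+n}$ built into the paper, $F^{o}$ and $F$ are literally the same function (the isomorphism ${}^{o}$ carries $\sum_{i}v^{i}\partial_{z^{i}}$ to $\sum_{a}u^{a}\partial_{x^{a}}$, so the real fibre coordinates match the complex ones). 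Thus the entire content is a change of variables.

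The one computation that matters is the identity
\[
\sum_{b=1}^{2n}u^{b}\frac{\partial G}{\partial x^{b}}=\sum_{k=1}^{n}\Big(v^{k}\frac{\partial G}{\partial z^{k}}+\overline{v^{k}}\frac{\partial G}{\partial\overline{z^{k}}}\Big),
\]
valid for every smooth $G$, which I would verify from the Wirtinger relations $\partial_{x^{k}}=\partial_{z^{k}}+\partial_{\overline{z^{k}}}$, $\partial_{x^{k+n}}=\sqrt{-1}\,(\partial_{z^{k}}-\partial_{\overline{z^{k}}})$ together with $u^{k}=\tfrac12(v^{k}+\overline{v^{k}})$, $u^{k+n}=\tfrac{\sqrt{-1}}{2}(\overline{v^{k}}-v^{k})$ — the mixed terms $v^{k}\partial_{\overline{z^{k}}}G$ and $\overline{v^{k}}\partial_{z^{k}}G$ cancel in pairs. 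Applying it with $G=\partial F/\partial u^{a}$ turns Hamel's system into $\partial F/\partial x^{a}=\sum_{k}v^{k}\,\partial^{2}F/\partial z^{k}\partial u^{a}+\sum_{k}\overline{v^{k}}\,\partial^{2}F/\partial\overline{z^{k}}\partial u^{a}$ for $a=1,\dots,2n$.

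Finally I would take, for each $i=1,\dots,n$, the $\mathbb{C}$-linear combination $\tfrac12\big[(\text{eq})_{a=i}-\sqrt{-1}\,(\text{eq})_{a=i+n}\big]$ of these equations; since $\tfrac12(\partial_{x^{i}}-\sqrt{-1}\,\partial_{x^{i+n}})=\partial_{z^{i}}$ and $\tfrac12(\partial_{u^{i}}-\sqrt{-1}\,\partial_{u^{i+n}})=\partial_{v^{i}}$, it collapses exactly to \eqref{pfc}. Conversely, \eqref{pfc} for all $i$ together with its complex conjugates — which hold automatically, $F$ being real so that $\overline{\partial F/\partial z^{i}}=\partial F/\partial\overline{z^{i}}$ and $\overline{\partial F/\partial v^{i}}=\partial F/\partial\overline{v^{i}}$ — recovers all $2n$ real Hamel equations via $\partial_{x^{i}}=\partial_{z^{i}}+\partial_{\overline{z^{i}}}$, $\partial_{x^{i+n}}=\sqrt{-1}\,(\partial_{z^{i}}-\partial_{\overline{z^{i}}})$ (and likewise for $u,v$). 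Hence \eqref{pfc} is equivalent to projective flatness, which is part (1); part (2) is identical with $F$ replaced by $F^{2}$ throughout and Hamel's criterion \eqref{pf} replaced by Shen's criterion \eqref{df}, the factor $2$ in \eqref{df} producing the factor $2$ in \eqref{dpfc}. The only real obstacle is the coordinate bookkeeping: establishing the displayed identity (checking the cancellation) and checking that the recombination of the $2n$ real equations into $n$ complex ones is invertible, so that the equivalence is genuinely two-sided — this last point amounting to the observation that once \eqref{pfc} holds for each $i$ its conjugate is free, so the $n$ complex equations carry exactly the information of the $2n$ real ones. There is no deeper difficulty: Proposition~\ref{pr1} is essentially Hamel's and Shen's theorems rewritten in complex notation.
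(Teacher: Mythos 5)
Your derivation is correct: the identity $\sum_{b=1}^{2n}u^{b}\partial_{x^{b}}G=\sum_{k}\bigl(v^{k}\partial_{z^{k}}+\overline{v^{k}}\partial_{\overline{z^{k}}}\bigr)G$ holds exactly as you compute it, the combination $\tfrac12\bigl[(\cdot)_{a=i}-\sqrt{-1}(\cdot)_{a=i+n}\bigr]$ does convert the $2n$ real Hamel (resp. Shen) equations for $F^{o}$ into \eqref{pfc} (resp. \eqref{dpfc}), and the converse direction is indeed secured by the reality of $F$, which makes the conjugate equations automatic so that the $n$ complex equations carry the full information of the $2n$ real ones. Note, however, that the paper itself gives no proof of Proposition \ref{pr1} — it is quoted from the reference [Zhong3] — so there is no in-paper argument to compare against; your route, translating the real criteria \eqref{pf} and \eqref{df} for the associated metric $F^{o}$ into holomorphic coordinates via the Wirtinger relations, is the natural derivation and is consistent with how the paper uses the statement (projective and dual flatness of a strongly convex complex Finsler metric are understood in the real Finsler sense for $F^{o}$).
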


\begin{theorem}
Let $F$ be a strongly convex complex Finsler metric on an open domain $D\subset\mathbb{C}^n$. Then

(i) $F$ is projectively flat if and only if it comes from a strongly convex complex Minkowski metric;

(ii) $F$ is dually flat if and only if it comes from a strongly convex complex Minkowski metric.
\end{theorem}
\begin{proof} If $F(z,v)$ comes from a complex Minkowski metric, then $F$ is independent of the variable $z=(z^1,\cdots,z^n)$. Thus $F$ satisfies \eqref{pfc} and \eqref{dpfc}, that is, $F$ is  projectively flat and dually flat.

Conversely, if $F$ is projectively flat, then \eqref{pfc} holds. Contracting with $v^i$ in both sides of \eqref{pfc} yields
\begin{equation}
\frac{\partial F}{\partial z^i}v^i=\frac{1}{2}\frac{\partial F}{\partial z^j}v^j+\frac{1}{2}\frac{\partial F}{\partial\overline{z^j}}\overline{v^j},\label{e2}
\end{equation}where we used the homogeneity of $F$, i.e. $F^2(z,\lambda v)=\lambda\overline{\lambda}F^2(z,v)$ for $\lambda\in\mathbb{C}-\{0\}$. By \eqref{e2}, we get
\begin{equation}
\frac{\partial F}{\partial z^i}v^i=\frac{\partial F}{\partial \overline{z^i}}\overline{v^i}.\label{e}
\end{equation}
Differentiating \eqref{e} with respect to $v^j$ and exchanging indexes $i$ and $j$ gives
\begin{equation}
\frac{\partial F}{\partial z^i}+ \frac{\partial^2F}{\partial z^j\partial v^i}v^j=\frac{\partial^2F}{\partial\overline{z^j}\partial v^i}\overline{v^j}. \label{d}
\end{equation}
By \eqref{pfc} and \eqref{d}, we obtain
\begin{equation}
\frac{\partial^2F}{\partial z^j\partial v^i}v^j=0. \label{c}
\end{equation}
Again contracting with $v^i$ in both sides of \eqref{c} implies
\begin{equation}
\frac{\partial F}{\partial z^j}v^j=0. \label{r}
\end{equation}
Differentiating \eqref{r} with respect to $\overline{v^i}$, we get
\begin{equation}
\frac{\partial^2 F}{\partial z^j\partial\overline{v^i}}v^j=0. \label{ff}
\end{equation}
Taking conjugation in \eqref{ff} shows
\begin{equation}
\frac{\partial^2 F}{\partial \overline{z^j}\partial v^i}\overline{v^j}=0. \label{v}
\end{equation}
Plunge \eqref{c} and \eqref{v} into \eqref{d}, we get $\frac{\partial F}{\partial z^i}=0$ for $i=1, \cdots, n$. By conjugation we get $\frac{\partial F}{\partial\overline{z^i}}=0$ for $i=1,\cdots,n$.  Thus $F(z,v)$ is independent of the base manifold coordinates $z=(z^1,\cdots,z^n)$, i.e.,
$F$ comes from a complex Minkowski metric.

If $F$ is dually flat, then $F$ satisfies \eqref{dpfc}.
 Contracting \eqref{dpfc} with $v^i$ gives
$$2\frac{\partial F^2}{\partial z^i}v^i=\frac{\partial F^2}{\partial z^j}v^j+\frac{\partial F^2}{\partial \overline{z^j}}\overline{v^j},$$
that is
\begin{equation}
\frac{\partial F^2}{\partial z^i}v^i=\frac{\partial^2 F^2}{\partial \overline{z^i}}\overline{v^i}.\label{e1}
\end{equation}
Differentiating \eqref{e1} with respect to $v^j$ and exchanging indexes $i$ and $j$ yields
\begin{equation}
\frac{\partial F^2}{\partial z^i}+ \frac{\partial^2F^2}{\partial z^j\partial v^i}v^j=\frac{\partial^2F^2}{\partial\overline{z^j}\partial v^i}\overline{v^j}. \label{d1}
\end{equation}
By \eqref{dpfc} and \eqref{d1}, we obtain
\begin{equation}
\frac{\partial^2F^2}{\partial \overline{z^j}\partial v^i}\overline{v^j}=3\frac{\partial^2F^2}{\partial z^j\partial v^i}v^j. \label{c1}
\end{equation}
Again contracting with $v^i$ in both sides of \eqref{c1} shows
\begin{equation}
\frac{\partial F^2}{\partial \overline{z^j}}\overline{v^j}=3\frac{\partial F^2}{\partial z^j}v^j, \label{r1}
\end{equation}
which implies
\begin{equation}
\frac{\partial F^2}{\partial z^j}v^j=3\frac{\partial F^2}{\partial \overline{z^j}}\overline{v^j}. \label{r2}
\end{equation}
By \eqref{r1} and \eqref{r2} we obtain
\begin{equation}
\frac{\partial F^2}{\partial z^j}v^j=\frac{\partial F^2}{\partial\overline{z^j}}\overline{v^j}=0. \label{f1}
\end{equation}
So
\begin{equation}
\frac{\partial^2F^2}{\partial \overline{z^j}\partial v^i}\overline{v^j}=0. \label{v1}
\end{equation}
Substituting \eqref{c1} and \eqref{v1} into \eqref{dpfc} gives $\frac{\partial F^2}{\partial z^i}=0$ for $i=1, \cdots, n$. By conjugation, we get $\frac{\partial F^2}{\partial \overline{z^i}}=0$ for $i=1,\cdots,n$. Thus $F^2$ is independent of the base manifold coordinates $z=(z^1,\cdots,z^n)$, i.e.,
$F$ comes from a complex Minkowski metric. This completes the proof.
\end{proof}
\begin{corollary}
Let $F$ be a strongly convex complex Finsler metric on an open domain $D\subset\mathbb{C}^n$. Then
 $F$ is projectively flat if and only it is dually flat.
\end{corollary}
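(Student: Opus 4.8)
The plan is to obtain this corollary as an immediate consequence of the theorem just proved, which already contains all the substantive content. Indeed, parts (i) and (ii) of that theorem assert that, for a strongly convex complex Finsler metric $F$ on $D$, the property of being projectively flat and the property of being dually flat are \emph{each} equivalent to one and the same intrinsic condition: namely that $F$ does not depend on the base coordinates $z=(z^1,\dots,z^n)$, i.e. that $F$ comes from a strongly convex complex Minkowski metric. Two statements equivalent to a common third statement are equivalent to each other, which is precisely the assertion of the corollary.

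Concretely, I would spell this out as a short two-way implication. If $F$ is projectively flat, then part (i) shows $F$ comes from a strongly convex complex Minkowski metric, hence $F$ is independent of $z$; feeding $\partial F^2/\partial z^i=0$ into \eqref{dpfc} (equivalently, invoking the easy direction of part (ii)) shows $F$ is dually flat. Conversely, if $F$ is dually flat, part (ii) gives that $F$ is independent of $z$, and then \eqref{pfc} holds trivially (equivalently, the easy direction of part (i) applies), so $F$ is projectively flat. This closes the loop and proves the corollary.

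There is essentially no obstacle here: the corollary is bookkeeping layered on top of the theorem, and the only real decision is presentational. An alternative would be to prove directly that the PDE systems \eqref{pfc} and \eqref{dpfc} are equivalent for strongly convex complex Finsler metrics --- expanding $\partial F^2/\partial z^i = 2F\,\partial F/\partial z^i$, $\partial^2 F^2/\partial z^j\partial v^i = 2(\partial F/\partial v^i)(\partial F/\partial z^j)+2F\,\partial^2 F/\partial z^j\partial v^i$, and exploiting the homogeneity identities used in the theorem's proof --- but this path merely re-derives the theorem and is strictly longer. I would therefore keep the proof to the brief deduction above.
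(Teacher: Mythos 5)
Your proposal is correct and matches the paper's (implicit) reasoning exactly: the corollary is an immediate consequence of the theorem, since both properties are equivalent to $F$ coming from a strongly convex complex Minkowski metric, and the paper offers no separate argument beyond this. Nothing further is needed.
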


\begin{corollary} Let $F$ be a Hermitian metric defined on a domain $D\subset\mathbb{C}^n$. Then $F$ is projectively flat (resp. dually flat) if and only if $F$ is a  positive scalar multiple of the complex Euclidean inner product.
\end{corollary}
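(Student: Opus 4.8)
The plan is to obtain this corollary as an immediate consequence of the main theorem. Recall that a Hermitian metric on $D$ is a metric whose square has the form $F^2(z,v)=h_{i\overline{j}}(z)v^i\overline{v^j}$, where $(h_{i\overline{j}}(z))$ is a smooth field of positive definite Hermitian matrices on $D$. The first step is to check that such an $F$ is an admissible object for the main theorem, i.e.\ a \emph{strongly convex} complex Finsler metric: its metric tensor is $G_{\alpha\overline{\beta}}=\partial^2F^2/\partial v^\alpha\partial\overline{v^\beta}=h_{\alpha\overline{\beta}}$, which is positive definite, and the associated real function $F^o$ coincides with the Riemannian norm $\sqrt{g_{ab}(x)u^au^b}$ built from the real part of $(h_{i\overline{j}})$, which is a genuine real Finsler metric. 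Hence the main theorem applies to $F$.

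Granting this, the second step is to invoke the main theorem: $F$ is projectively flat (resp.\ dually flat) if and only if it comes from a strongly convex complex Minkowski metric, i.e.\ if and only if $F$, equivalently $F^2$, does not depend on $z$. For a Hermitian metric this says precisely that $(h_{i\overline{j}}(z))$ is constant, say $h_{i\overline{j}}(z)\equiv a_{i\overline{j}}$ with $(a_{i\overline{j}})$ a constant positive definite Hermitian matrix; diagonalizing and rescaling via a $\mathbb{C}$-linear change of holomorphic coordinates then puts $F^2$ in the form of a positive scalar multiple of the complex Euclidean inner product $\sum_i|v^i|^2$. For the converse I would just note that such an $F^2$ is independent of $z$, so that every $z$-derivative appearing in \eqref{pfc} and \eqref{dpfc} vanishes identically; thus $F$ is at once projectively flat and dually flat, which is also consistent with the preceding corollary asserting that the two notions coincide for strongly convex complex Finsler metrics.

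I do not expect any genuine obstacle: all of the content lies in the main theorem, and this corollary merely transcribes its conclusion into the Hermitian setting. The only points that need attention are the two endpoints — verifying that a Hermitian metric qualifies as a strongly convex complex Finsler metric, and normalizing the resulting constant Hermitian form, where one must keep in mind that it equals a scalar multiple of $\sum_i|v^i|^2$ only after a linear change of holomorphic coordinates. Both are routine.
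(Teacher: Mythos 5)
Your proof is correct and matches the paper's (implicit) argument: the corollary is stated there without proof, as an immediate consequence of the main theorem, exactly as you derive it --- projective (resp.\ dual) flatness forces the Hermitian coefficients $h_{i\overline{j}}(z)$ to be constant, while conversely a $z$-independent $F$ trivially satisfies \eqref{pfc} and \eqref{dpfc}. Your additional remark that a constant positive definite Hermitian form becomes a positive scalar multiple of $\sum_i|v^i|^2$ only after a $\mathbb{C}$-linear change of holomorphic coordinates is a precision the paper glosses over, and your handling of it is the right one.
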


{\bf Acknowledgements:}\; \small This work was supported by the National Natural Science Foundation of China (Grant Nos. 11671330, 11701494, 11571288) and the Nanhu Scholars Program for Young Scholars of Xinyang Normal University.


\begin{thebibliography}{999999}

\bibitem{AP} M. Abate and G. Patrizio,  Finsler metrics---A global approach with applications to geometric function
theory,  Lecture Notes in Mathematics, vol. {\bf1591},
Springer-Verlag,  Berlin Aeidelberg, 1994.

\bibitem{ChengShen} X. Cheng, Z. Shen, Finsler geometry--An approach via Randers metrics, Science Press Beijing and Springer-Verlag Berlin Heidelberg 2012.
\bibitem{CS}S. S. Chern, Z. Shen, Riemann-Finsler geometry, WorldScientific, 2005.

\bibitem{H}G. Hamel, $\ddot{\mbox{U}}$ber die Geometrieen in denen die Geraden die K$\ddot{\mbox{u}}$rzesten sind, Math. Ann., {\bf57} (1903), 231-264.

\bibitem{Hilbert}D. Hilbert, Mathematical Problems, Bull. Amer. Math. Soc., {\bf37} (2001), 407-436; Reprinted from Bull. Amer. Math. Soc., {\bf8} (1902), 437-479.

\bibitem{Am}S. I. Amari and H. Nagaoka, Methods of Information Geometry,
AMS Translation of Math. Monograph, {\bf191}, Oxford University Press, 2000.

\bibitem{Sh} Z. Shen, Riemann-Finsler geometry with applications to information geometry, Chin. Ann. Math., {\bf 27}B(1) (2006), 73-94.

\bibitem{Ber1} L. Berwald, Parallel\"ubertragung in allgemeinen R\"aumen, Atti Congr. Intern. Mat. Bologna, {\bf 4} (1928), 263-270.

\bibitem{Ber2} L. Berwald, \"Uber die n-dimensionalen Geometrien konstanter Kr\"ummung, in denen die Geraden die K\"urzesten sind, Math. Z., {\bf30} (1929), 449-469.

\bibitem{Bryant1} R. Bryant, Projectively flat Finsler 2-spheres of constant curvature, Selecta Math.(N.S.), {\bf3} (1997), 161-204.

\bibitem{Bryant2} R. Bryant, Finsler structures on the 2-sphere satisfying $K=1$, in: Finsler Geometry, in: Contemp. Math., vol. {\bf196}, Amer. Math. Soc., Providence, RI, 1996, 27-42.

\bibitem{Shen} Z. Shen, Projectively flat Finsler metrics of constant flag curvature, Trans. Amer. Math. Soc., {\bf355} (4)(2003), 1713-1728.

\bibitem{Mo} X. Mo, Z. Shen and C. Yang, Some constructions of projectively flat Finsler metrics, Sci. China Ser. A, {\bf49} (2006), 703-714.

\bibitem{Zhou} L. Zhou, Projective spherically symmetric Finsler metrics with constant flag curvature in $\mathbb{R}^n$, Geom. Dedicata, {\bf158} (2012), 353-364.

\bibitem{LiB} B. Li, On the classification of projectively flat Finsler metrics with constant flag curvature, Adv. Math., {\bf257} (2014), 266-284.

\bibitem{Cheng} X. Chen and Y. Tian, Locally dually flat Finsler metrics with special curvature properties, Differ. Geom. Appl., {\bf29} (2011), 98-106.

\bibitem{Xia} Q. Xia, On a class of locally dually flat Finsler metrics of isotropic flag curvature, Publ. Math. Debrecen, {\bf78} (2011), 169-190.

\bibitem{Tayebi} A. Tayebi, E. Peyghan and H. Sadeghi, On locally dually flat $(\alpha, \beta)$ metrics with isotropic S curvature, Indian J. Pure Appl. Math., {\bf43}(5) (2012), 521-534.

\bibitem{LiB2} B. Li, On dually flat Finsler metrics, Differ. Geom. Appl., {\bf31} (2013), 718-724.

\bibitem{Yu} C. Yu, On dually flat Randers metrics, Nonlinear Anal., {\bf95} (2014) 146-155.

\bibitem{Yu2} C. Yu, On dually flat general $(\alpha, \beta)$-metrics, Differ. Geom. Appl., {\bf 40} (2015), 111-122.

\bibitem{Huang}L. Huang and X. Mo, On some dually flat Finsler metrics with orthogonal invariance, Nonlinear Anal., {\bf108} (2014), 214-222.

\bibitem{Zhong3} C. Zhong, On unitary invariant strongly pseudoconvex complex Finsler metrics, Differ. Geom. Appl., {\bf 40} (2015), 159-186.

\bibitem{XZ1} H. Xia and C. Zhong, A classification of unitary invariant weakly complex Berwald metrics of constant holomorphic curvature, Differ. Geom. Appl., {\bf 43} (2015), 1-20.

\bibitem{XZ2} H. Xia and C. Zhong, On unitary invariant weakly complex Berwald metrics with vanishing holomorphic curvature and closed geodesics, Chin. Ann. Math., {\bf 37}B(2) (2016), 161-174.

\bibitem{XZ3}H. Xia and C. Zhong, On a class of smooth complex Finsler metrics, Results Math.,  {\bf 71} (2017), 657-686.

\bibitem{XZ4} H. Xia and C. Zhong, On strongly convex weakly K\"ahler-Finsler metrics of constant flag curvature, J. Math. Anal. Appl., {\bf443} (2016), 891-912.
\end{thebibliography}
\end{document}